\numberwithin{equation}{section}
\newtheorem{theorem}{Theorem}[section]
\newtheorem{proposition}[theorem]{Proposition}
\theoremstyle{definition}
\newtheorem{remark}[theorem]{Remark}
\begin{document}

\parskip 4pt
\baselineskip 16pt


\title[On some sequences of polynomials generating the Genocchi numbers]
{On some sequences of polynomials generating the Genocchi numbers}
\author[Andrei K. Svinin]{Andrei K. Svinin}
\address{Andrei K. Svinin, 
Matrosov Institute for System Dynamics and Control Theory of 
Siberian Branch of Russian Academy of Sciences,
P.O. Box 292, 664033 Irkutsk, Russia}
\email{svinin@icc.ru}
\date{\today}

\begin{abstract}
Sequences of Genocchi numbers of the first and second kind are considered. For these numbers, an approach  based on their representation using sequences of polynomials is developed. Based on this approach, for these numbers some identities generalizing the known identities are constructed.
\end{abstract}
\maketitle

\section{Introduction} 

 Classical Genocchi numbers $(G_{2n})_{n\geq 1}=(1, 1, 3, 17, 155,\ldots)$  or, as they say, Genocchi numbers of the first kind,
most easily defined by an exponential generating function (see for example \cite{Comtet1}, \cite{Stanley})
\begin{equation}
\frac{2t}{e^t+1}=t+\sum_{n\geq 1}(-1)^nG_{2n}\frac{t^{2n}}{(2n)!}.
\label{egf}
\end{equation}
These numbers have a big number of applications in different areas of mathematics when enumerating various objects. The most famous applications of these numbers are that they, like Euler numbers, enumerate permutations of a given type \cite{Dumont4}. Several works by Dumont et al. (see, for example, \cite {Dumont3}, \cite{Dumont2}, \cite {Dumont}) and subsequent works by other authors are devoted to this line.

It follows from the generating function (\ref{egf}) that the numbers $G_{2n}$ are related to the Bernoulli numbers by the relation $G_{2n}=(-1)^{n+1}2(4^n-1)B_{2n}$, that, in turn, are determined by the relation
\[
\frac{t}{e^t-1}=1-\frac{t}{2}+\sum_{n\geq 1}B_{2n}\frac{t^{2n}}{(2n)!}.
\] 
The Genocchi numbers are contained in some numerical triangles. The most classic example is the Seidel triangle, which consists of the numbers $g_{n, j}$, where $j\geq 1$ and $1\leq n\leq (j+1)/2$, defined as follows. Let $g_{1, 1}=1$, and the remaining numbers in the triangle are determined by the formulas
\[
g_{n, 2j}=\sum_{q\geq n} g_{q, 2j-1},\;\;
g_{n, 2j+1}=\sum_{q\leq n} g_{q, 2j}.
\]
In the Seidel's triangle, Genocchi numbers are defined as $G_{2n}=g_{n, 2n-1}$. Also, in the Seidel triangle, one can find the so-called median  Genocchi  numbers or the second kind Genocchi numbers $(H_{2n-1})_{n\geq 1}=(1, 2, 8, 56,\ldots)$, namely $H_{2n-1}=g_ {1, 2n}$. These numbers, as it is known, also have good applications when enumerating various mathematical objects. Barsky \cite{Barsky} and Dumont \cite{Dumont3} proved that the number $H_{2n + 1}$ is divisible by $2^n$ for any $n\geq 0$ and, therefore, the number $h_n=H_{2n+1}/2^n$ for $n\geq 0$ is an integer. The numbers $(h_n)_{n\geq 0}$ are called the normalized median Genocchi numbers. A good exposition of the combinatorial meaning of these numbers can be found in \cite{Feigin}.

Speaking of the Genocchi numbers, we should mention the classical identities involving these numbers. The best known identity is Seidel's implicit recurrence relation
\[
\sum_{j=0}^{\left\lfloor\frac{n}{2} \right\rfloor }(-1)^j{n\choose 2j}G_{2n-2j}=0\;\;\forall n\geq 2.
\]
It is known that the Genocchi numbers of the first and second kind are related by
\[
H_{2n-1}=\sum_{j=0}^{\left\lfloor\frac{n-1}{2} \right\rfloor }(-1)^{j}{n\choose 2j+1}G_{2n-2j}\;\; \forall n\geq 1. 
\]

One of the ways to generate Genocchi numbers of both the first and second kind is related to the fact that they can be given as values of a sequence of polynomials determined by a recurrence relation for some value of the argument. The purpose of the article is to develop and apply this approach. Stirling type numbers are involved in the construction of these polynomials; therefore, in the next section, we briefly outline the approach to constructing these numbers. In the  section \ref{sect:3}, we give two examples of sequences of polynomials determined by the corresponding numbers of Stirling type and show how they participate in the construction of some identities, including the Genocchi numbers. Thus, one of the results of the article is the construction of new identities on Genocchi numbers that generalize known identities. In the \ref{sect:4} section, we present a countable class of sequences of polynomials that uniformly and naturally define a countable class of integer sequences. The first two sequences in this class are the Genocchi numbers of the first and second kind, respectively. The rest of the integer sequences belonging to this class we call Genocchi type numbers.

\section{Stirling type numbers} 

First of all, let us say about the general approach for constructing Stirling type numbers \cite{Comtet}, in the framework of which two-parameter sets of numbers of interest are determined. Given an arbitrary sequence of numbers $(a_n)_{n\geq 1}$, we define two two-parameter sets of polynomials in the variables $(a_1, a_2,\ldots)$ by the relations
\begin{equation}
t(n+1,j)=t(n,j-1)-a_nt(n,j)
\label{first}
\end{equation}
and
\begin{equation}
T(n+1,j)=T(n,j-1)+a_jT(n,j)
\label{second}
\end{equation}
with the condition that $T (n, 0)=t (n, 0)=\delta_{n, 0}$ and $t(n, j)=T (n, j)=0$ for $n<j $. It turns out that $T(n, j)$ and $t(n, j)$ are symmetric polynomials in their variables. More precisely, they can be calculated using generating functions
\[
t(t-a_1)\cdots(t-a_{n-1})=\sum_{j=0}^{n}t(n, j)t^j\;\;\mbox{and}\;\;
\sum_{n\geq j}T(n, j)t^{n-j}=\frac{1}{1-a_1t}\cdots\frac{1}{1-a_jt}.
\]
Given a  suitable sequence of numbers $(a_n)_{n\geq 1}$, we get some numbers of Stirling type of the first and second kind, respectively. For example, the choice of $a_n=n$ corresponds to the classic Stirling numbers $s (n, j)$ and $S(n, j)$. In the case  $a_n=n^2$, we get the so-called central factorial numbers \cite {Carlitz3}, \cite{Stanley}, which we  denote as in the general case by the symbols $t(n, j)$ and $T(n, j)$. In turn, if $a_n=n(n+1)$, then we get the so-called Legendre-Stirling numbers \cite{Andrews}, which we denote as $Ls(n, j)$ and $LS(n, j )$.

\section{Sequences of polynomials generating the Genocchi numbers}

\label{sect:3}

\subsection{Gandhi polynomials}

A common property of the first and second kind Genocchi numbers is that they can be obtained as the values of the corresponding polynomials $F_n(z)$ for some value of the argument. These polynomials are determined by a recurrence relation of the form
\[
F_{n+1}(z)=f(z)F_{n}(z+1)-g(z)F_{n}(z)
\]
starting from $F_1(z)=1$, where $f(z)$ and $g(z)$ are some suitable second-degree polynomials. For example, the numbers $G_{2n}$ are defined by the Gandhi polynomials \cite{Gandhi}, \cite{Carlitz2}, \cite {Riordan}, which are given by the recurrence relation
\begin{equation}
F_{n+1}(z)=(z+1)^2F_{n}(z+1)-z^2F_{n}(z).
\label{recurG}
\end{equation}
The first few of them are as follows:
\[
F_2(z)=2z+1,\;\;
F_3(z)=6z^2+8z+3,\;\;
F_4(z)=24z^3+60z^2+54z+17,
\]
\[
F_5(z)=120z^4+480z^3+762z^2+556z+155,\ldots
\]
In the works \cite{Carlitz2}, \cite{Riordan} it was proved that the Gandhi polynomials are related to the first-kind Genocchi numbers by the relations $F_n(0)=G_{2n}$ and $F_n(1)=G_{2n+2}$ for all $n \geq 1$.
\begin{remark}
Gandhi in \cite{Gandhi} actually defined the polynomials $A_n(z)$
\[
A_{n+1}(z)=z^2A_{n}(z+1)-(z-1)^2A_{n}(z),
\]
starting from $A_0(z)=1$, which are related to $F_n (z)$ by the relation $F_n(z)=A_{n-1}(z+1)$. The polynomials $F_n(z)$ defined by the relation (\ref{recurG}) were used, for example, in \cite{Dumont}.
\end{remark}
It is easy to show \cite{Carlitz} that the Gandhi polynomials can be represented as
\begin{equation}
zF_{n}(z)=\sum_{j=0}^n(-1)^{n+j}T(n, j)j! (z)^{j},
\label{polynom}
\end{equation}
where $(z)^0:=1$ and $(z)^n:=z(z+1)\cdots (z+n-1)\;\; \forall n\geq 1 $, and $T(n, j)$ are the central factorial numbers of the second kind mentioned above. To prove the representation (\ref{polynom}), it is more convenient to consider polynomials $P_n(z):=zF_n(z) $ satisfying, by virtue of (\ref{recurG}), the recurrence relation
\[
P_{n+1}(z)=z(z+1)P_{n}(z+1)-z^2P_{n}(z)
\]
starting from $P_1(z)=z$, which can be written in the operator form $ P_{n+1}(z)=R\left(P_{n} (z)\right) $ with $ R:=z(z+1) \Lambda-z^2 $, where $\Lambda$ is the shift operator acting according to the rule $\Lambda(f(z))=f(z+1)$. To prove the relation (\ref{polynom}), one can use the easily verified relation
\[
R((z)^n)=-n^2(z)^n+(n+1)(z)^{n+1}\;\;\forall n\geq 0
\] 
and a recurrence relation for central factorial numbers of the second kind.  

In turn, given that $F_n(1)=G_{2n+2}$, it is not difficult to deduce from the relation (\ref{polynom}) the well-known identity \cite{Dumont3}, \cite {Carlitz}, \cite{Stanley}
\begin{equation}
G_{2n+2}=\sum_{j=0}^n(-1)^{n+j}T(n, j)\left(j!\right)^2\;\;\forall n\geq 1.
\label{polynom1}
\end{equation}
Next, we derive the identity for the numbers $G_{2n}$ generalizing (\ref{polynom1}).
\begin{proposition}
The values of the Gandhi polynomials for integer non-negative values of the argument are determined by the relation
\begin{equation}
F_{n}(m)(m!)^2=\sum_{j=0}^{m}(-1)^{m+j}t(m, j)G_{2n+2j},
\label{Fnom121}
\end{equation}
where $t(n, j)$ are the central factorial numbers of the first kind
\end{proposition}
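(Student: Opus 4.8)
The plan is to establish (\ref{Fnom121}) by induction on $m$, the statement being inducted upon being ``(\ref{Fnom121}) holds for every $n\geq 1$'', since the inductive step will use the hypothesis at two consecutive values of the lower index. The base case $m=0$ is immediate: the right-hand side collapses to $t(0,0)G_{2n}=G_{2n}$, while the left-hand side is $F_n(0)(0!)^2=F_n(0)=G_{2n}$ by the identity $F_n(0)=G_{2n}$ recalled above. No separate treatment of $m=1$ is needed; it will fall out of the inductive step.

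For the passage from $m$ to $m+1$, I would evaluate the Gandhi recurrence (\ref{recurG}) at $z=m$, obtaining
\[
F_{n+1}(m)=(m+1)^2F_n(m+1)-m^2F_n(m),
\]
solve for $F_n(m+1)$, and multiply through by $\bigl((m+1)!\bigr)^2=(m+1)^2(m!)^2$ to get
\[
F_n(m+1)\bigl((m+1)!\bigr)^2=(m!)^2F_{n+1}(m)+m^2\,(m!)^2F_n(m).
\]
Into the first term I would insert the induction hypothesis for the lower index $n+1$, namely $(m!)^2F_{n+1}(m)=\sum_{j=0}^{m}(-1)^{m+j}t(m,j)G_{2n+2j+2}$, and into the second the hypothesis for $n$, namely $(m!)^2F_n(m)=\sum_{j=0}^{m}(-1)^{m+j}t(m,j)G_{2n+2j}$.

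The rest is bookkeeping. In the first sum I shift $j\mapsto j-1$, so that it runs over $1\leq j\leq m+1$ with summand $(-1)^{(m+1)+j}t(m,j-1)G_{2n+2j}$, using $(-1)^{m+j-1}=(-1)^{(m+1)+j}$; in the second I pull $m^2$ inside and absorb one sign to obtain summand $(-1)^{(m+1)+j}\bigl(-m^2t(m,j)\bigr)G_{2n+2j}$ for $0\leq j\leq m$. Adding the two, the coefficient of $(-1)^{(m+1)+j}G_{2n+2j}$ over the full range $0\leq j\leq m+1$ is $t(m,j-1)-m^2t(m,j)$, where at the two endpoints one uses $t(m,-1)=0$ and $t(m,m+1)=0$ (the latter since $t(n,j)=0$ for $n<j$). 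Because the central factorial numbers of the first kind are precisely the $t(n,j)$ produced by (\ref{first}) with the choice $a_n=n^2$, this coefficient equals $t(m+1,j)$, and the right-hand side becomes $\sum_{j=0}^{m+1}(-1)^{(m+1)+j}t(m+1,j)G_{2n+2j}$, which closes the induction.

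The argument is routine; the only point that needs care is the combination of the two sums — correctly shifting the index of one of them, tracking the resulting sign change, and checking that the boundary terms $j=0$ and $j=m+1$ are consistent with the recurrence (\ref{first}). One could alternatively try to deduce (\ref{Fnom121}) from the known evaluation (\ref{polynom1}) of $G_{2k}$ via the orthogonality between central factorial numbers of the first and second kinds, but there the index of $G_{2n+2j}$ corresponds to $T(n+j-1,\cdot)$, whose first argument drifts with the summation variable, making that route distinctly less transparent than the direct induction above.
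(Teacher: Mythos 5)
Your proof is correct and is essentially the paper's own argument: the paper defers this proposition to Proposition \ref{pr:8}, which it proves by the same induction on $m$, combining the recurrence (\ref{first}) for the first-kind numbers $t^{(v)}(m,j)$ (your index-shift-and-sign bookkeeping is exactly the identity $\xi_{m+1,n}=\xi_{m,n+1}+m(m+v)\xi_{m,n}$ there) with the rearranged polynomial recurrence, which for $v=0$ is precisely your rearranged Gandhi recurrence. The only difference is that you treat the special case $v=0$ directly rather than the general family.
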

We do not prove this proposition here, since it will be proved later for a more general case.

In turn, it follows from (\ref{polynom}) that
\begin{equation}
F_{n}(m)=\frac{1}{m!}\sum_{j=0}^n(-1)^{n+j}T(n, j)j!(j+m-1)!.
\label{Fnom1212}
\end{equation}
Comparing (\ref{Fnom121}) and (\ref{Fnom1212}), we obtain the following result:
\begin{proposition}
The following identity holds:
\begin{equation}
\sum_{j=0}^{m}(-1)^{m+j}t(m, j)G_{2n+2j}=m!\sum_{j=0}^n(-1)^{n+j}T(n, j)j!(j+m-1)!\;\; \forall m\geq 0
\label{Fnom121234}
\end{equation}
for all $m\geq 0$ and $n\geq 1$.
\end{proposition}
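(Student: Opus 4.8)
The plan is to eliminate $F_n(m)$ between the two expressions for the Gandhi polynomial value that are already available. Relation (\ref{Fnom121}) asserts $F_n(m)(m!)^2=\sum_{j=0}^{m}(-1)^{m+j}t(m,j)G_{2n+2j}$, while (\ref{Fnom1212}) asserts $F_n(m)=\frac{1}{m!}\sum_{j=0}^n(-1)^{n+j}T(n,j)j!(j+m-1)!$. Multiplying the latter through by $(m!)^2$ gives $F_n(m)(m!)^2=m!\sum_{j=0}^n(-1)^{n+j}T(n,j)j!(j+m-1)!$, and equating the two resulting right-hand sides is precisely (\ref{Fnom121234}). So beyond this one-line comparison there is essentially nothing to do except confirm that both input relations hold in the claimed range $m\geq 0$, $n\geq 1$.

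For (\ref{Fnom1212}) I would start from the representation (\ref{polynom}), that is, $mF_n(m)=\sum_{j=0}^n(-1)^{n+j}T(n,j)j!\,(m)^j$, and for $m\geq 1$ substitute $(m)^j=m(m+1)\cdots(m+j-1)=(m+j-1)!/(m-1)!$; dividing by $m$ then yields (\ref{Fnom1212}). For $m=0$ one cannot divide numerically by $z$, but since $T(n,0)=0$ for $n\geq 1$ the $j=0$ summand of (\ref{polynom}) is the zero polynomial and every surviving $(z)^j$ with $j\geq 1$ carries a factor $z$; dividing the polynomial identity (\ref{polynom}) by $z$ and setting $z=0$ gives $F_n(0)=\sum_{j=1}^n(-1)^{n+j}T(n,j)j!\,(j-1)!$, which is the $m=0$ instance of (\ref{Fnom1212}) with the (formally undefined) $j=0$ term read as $0$. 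The remaining input, relation (\ref{Fnom121}), is taken as given here and is established in greater generality in Section \ref{sect:4}.

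With both relations in place, (\ref{Fnom121234}) follows by the comparison above. The argument is routine bookkeeping; the only point demanding a little care is the boundary value $m=0$, where the factor $(j+m-1)!$ degenerates at $j=0$ and one must lean on $T(n,0)=0$ to interpret the formula consistently. The genuinely substantive ingredient is the yet-unproven relation (\ref{Fnom121}), whose general form is deferred to Section \ref{sect:4}; granting it, the present proposition is immediate.
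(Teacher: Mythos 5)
Your proposal is correct and follows exactly the paper's route: the paper likewise derives (\ref{Fnom1212}) from the representation (\ref{polynom}), takes (\ref{Fnom121}) as given (deferring its proof to the general case in Section \ref{sect:4}), and obtains (\ref{Fnom121234}) by comparing the two expressions for $F_n(m)$. Your handling of the $m=0$ boundary via $T(n,0)=0$ matches the remark the paper makes immediately after the proposition.
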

\begin{remark}
At first glance, for $m=0$, the indefinite symbol $(-1)!$ appears in the formula (\ref{Fnom121234}), but since $T(n, 0)=0$ for all $n\geq 1$, then in fact in this case the formula is correct and we can write it as
\[
G_{2n}=\sum_{j=1}^n(-1)^{n+j}T(n, j)(j-1)!j!\;\;\forall n\geq 1. 
\]
\end{remark}

\subsection{Polynomials generating Genocchi numbers of the second kind}

In \cite{Dumont} (see also \cite{Claesson}), a sequence of polynomials $(C_n(z))_{n\geq 1}$ was defined by the recurrence relation
\[
C_{n+1}(z)=(z+1)^2C_{n}(z+1)-z(z+1)C_{n}(z)
\]
which starts from $C_1(z)= 1$ and it was proved that $C_n(1)=H_{2n+1}\;\;\forall n\geq 1$. It is easy to show that there exist polynomials $\tilde{F}_n(z) $ connected with $C_n(z)$ by the relation $C_n(z)=(z+1)\tilde{F}_{n-1}(z+1)$ for all $n\geq 2$. It is easy to verify that these polynomials are determined by the recurrence relation
\begin{equation}
\tilde{F}_{n+1}(z)=z(z+1)\tilde{F}_{n}(z+1)-(z-1)z\tilde{F}_{n}(z)
\label{rrel}
\end{equation}
starting with $\tilde{F}_1(z)=1$. The first few polynomials are as follows:
\[
\tilde{F}_2(z)=2z,\;\;
\tilde{F}_3(z)=6z^2+2z,\;\;
\tilde{F}_4(z)=24z^3+24z^2+8z,
\]
\[
\tilde{F}_5(z)=120z^4+240z^3+192z^2+56z,\ldots
\]
It easily follows from the relation (\ref{rrel}) that $\tilde{F}_n(0)=0\;\; \forall n \geq 2$. If $C_n(1)=H_{2n+1}$, then it is obvious that $\tilde{F}_n(2)=H_{2n+1}/2$. In turn, using the recurrence relation (\ref{rrel}), we can calculate the values of the polynomials $\tilde{F}_n(z) $ for any natural value of the argument $z$. For example, we can write
\[
\tilde{F}_n(1)=\frac{H_{2n-1}}{0!1!},\;\;
\tilde{F}_n(2)=\frac{H_{2n+1}}{1!2!},\;\;
\tilde{F}_n(3)=\frac{2H_{2n+1}+H_{2n+3}}{2!3!},\;\;
\]
\[
\tilde{F}_n(4)=\frac{12H_{2n+1}+8H_{2n+3}+H_{2n+5}}{3!4!},\ldots
\]
for all $n\geq 1$. We can give a general formula for the values of polynomials defined by the recurrence relation (\ref{rrel}).
\begin{proposition} \label{pr:1} 
The values of the polynomials $\tilde{F}_{n} (z) $ for integer non-negative values of the argument are determined by the relation
\begin{equation}
\sum_{j=0}^{m}(-1)^{m+j}Ls(m, j)H_{2n+2j-1}=\tilde{F}_{n}(m+1)m!(m+1)!,
\label{Fnom1}
\end{equation}
where $Ls(n, j)$  are Legendre-Stirling numbers of the first kind.  
\end{proposition}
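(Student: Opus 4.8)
The plan is to prove (\ref{Fnom1}) by induction on $m$, showing that its two sides, regarded as functions of the pair of indices $(n,m)$, obey one and the same recurrence -- precisely the defining recurrence (\ref{first}) of the Legendre-Stirling numbers of the first kind, with the two indices interlocked. Throughout write $a_k=k(k+1)$ for the weight sequence producing the Legendre-Stirling numbers via (\ref{first})--(\ref{second}), and set
\[
\Phi(n,m):=\sum_{j=0}^{m}(-1)^{m+j}Ls(m,j)H_{2n+2j-1},\qquad
\Psi(n,m):=\tilde{F}_{n}(m+1)\,m!\,(m+1)!,
\]
so that the assertion is $\Phi(n,m)=\Psi(n,m)$ for all $n\geq 1$, $m\geq 0$.

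First I would dispose of the base case $m=0$: there (\ref{Fnom1}) reduces to $Ls(0,0)H_{2n-1}=\tilde{F}_{n}(1)\,0!\,1!$, i.e. $\tilde{F}_{n}(1)=H_{2n-1}$, which is exactly the relation recalled just before the proposition. Next I would produce the recurrence for $\Psi$. Evaluating (\ref{rrel}) at the integer $z=m$ with $m\geq 1$ gives $\tilde{F}_{n+1}(m)=m(m+1)\tilde{F}_{n}(m+1)-(m-1)m\,\tilde{F}_{n}(m)$; solving for $\tilde{F}_{n}(m+1)$ and multiplying through by $(m-1)!\,m!$, and using the identity $(m-1)!\,m!\cdot m(m+1)=m!\,(m+1)!$, one obtains
\[
\Psi(n,m)=\Psi(n+1,m-1)+a_{m-1}\,\Psi(n,m-1),\qquad m\geq 1,
\]
since $(m-1)!\,m!\,\tilde{F}_{n+1}(m)=\Psi(n+1,m-1)$ and $(m-1)m=a_{m-1}$.

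Then I would produce the matching recurrence for $\Phi$. Substituting the Legendre-Stirling recurrence $Ls(m,j)=Ls(m-1,j-1)-a_{m-1}Ls(m-1,j)$, which is (\ref{first}), into the definition of $\Phi(n,m)$, splitting the sum, shifting the summation index by one in the first part so that $H_{2n+2j+1}=H_{2(n+1)+2j-1}$ appears, and tracking the sign via $(-1)^{m+j}=-(-1)^{(m-1)+j}$ (the boundary terms dropping out because $Ls(m-1,-1)=Ls(m-1,m)=0$), one gets
\[
\Phi(n,m)=\Phi(n+1,m-1)+a_{m-1}\,\Phi(n,m-1),\qquad m\geq 1.
\]
As $\Phi(n,0)=\Psi(n,0)=H_{2n-1}$ for every $n\geq 1$ and $\Phi$, $\Psi$ satisfy the identical recurrence in which the index $m$ decreases by one, a straightforward induction on $m$ yields $\Phi(n,m)=\Psi(n,m)$ for all $n\geq1$, $m\geq0$.

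The only delicate point is the bookkeeping of signs and factorials needed to make the two displayed recurrences coincide verbatim; in particular, the normalization by $m!\,(m+1)!$ in $\Psi$ is exactly what absorbs the factor $m(m+1)$ arising from (\ref{rrel}) and leaves the weight $a_{m-1}=(m-1)m$. Once this matching is set up there is no further obstacle. An alternative route would be to first establish, by the operator calculus of Section 2 -- the operator $z^{2}\Lambda-z(z-1)$ sends $(z)^{j}$ to $-j(j+1)(z)^{j}+(j+1)(z)^{j+1}$ -- the representation $z\tilde{F}_{n}(z)=\sum_{j}(-1)^{n+j}LS(n,j)\,j!\,(z)^{j}$ analogous to (\ref{polynom}), and then specialize $z=m+1$ using $(m+1)^{(j)}=(m+j)!/m!$; but the inductive argument above is shorter and self-contained. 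This computation is also the model for the general statement of Section \ref{sect:4}: for each member of that countable family the value at $z=m$ is governed by a quadratic weight $a_{j}$, and the same two steps identify its normalized values with the corresponding sums of Genocchi-type numbers.
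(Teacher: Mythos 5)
Your proof is correct and follows essentially the same route as the paper: the paper defers Proposition \ref{pr:1} to the general Proposition \ref{pr:8}, whose proof is exactly this argument --- both sides satisfy the recurrence $\xi_{m+1,n}=\xi_{m,n+1}+m(m+v)\xi_{m,n}$ (the left side via the defining recurrence (\ref{first}) of the first-kind Stirling-type numbers, the right side via the polynomial recurrence), with agreement at $m=0$. Your computation is the $v=1$ instance of that proof, with the bookkeeping of signs and factorials carried out correctly.
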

We also will not prove this proposition, since it is a special case of a more general statement, the proof of which will be presented later.

\subsection{Representation of Genocchi numbers of the second kind through Legendre-Stirling numbers}

Apparently, in \cite{Claesson} was obtained the relation 
\begin{equation}
H_{2n-1}=\sum_{j=0}^n(-1)^{n+j}LS(n, j)\left(j!\right)^2\forall n\geq 0,
\label{getthe}
\end{equation}
which is similar to the representation (\ref{polynom1}) for the first kind Genocchi numbers with the only difference that the numbers
$T(n, j)$ here are replaced by Legendre-Stirling numbers.  Since the representation (\ref{polynom1}) is deduced from the corresponding representation for the Gandhi polynomials (\ref{polynom}), it is natural to assume that a similar representation also exists for the polynomials $\tilde{F}_n(z) $ and indeed it is.
\begin{proposition}
The polynomials $ \tilde{F}_n(z)$ are representable in the form
\begin{equation}
z\tilde{F}_{n}(z)=\sum_{j=0}^n(-1)^{n+j}LS(n, j)j! (z)^{j}.
\label{polynom11}
\end{equation}
\end{proposition}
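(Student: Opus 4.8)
The plan is to follow the route already sketched above for the Gandhi polynomials. Put $P_n(z):=z\tilde{F}_n(z)$. Multiplying the recurrence (\ref{rrel}) by $z$ and using $(z+1)\tilde{F}_n(z+1)=P_n(z+1)$ together with $z\tilde{F}_n(z)=P_n(z)$, one obtains
\[
P_{n+1}(z)=z^2P_n(z+1)-z(z-1)P_n(z),\qquad P_1(z)=z,
\]
which in operator form reads $P_{n+1}(z)=R(P_n(z))$ with $R:=z^2\Lambda-z(z-1)$, where $\Lambda$ is the shift operator $\Lambda(f(z))=f(z+1)$. Since $z\tilde{F}_n(z)=P_n(z)$, the proposition is equivalent to the claim $P_n(z)=\sum_{j=0}^n(-1)^{n+j}LS(n,j)\,j!\,(z)^j$.

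First I would establish the auxiliary identity
\[
R\bigl((z)^j\bigr)=-j(j+1)(z)^j+(j+1)(z)^{j+1}\qquad\forall j\geq 0,
\]
which plays here the role that $R((z)^n)=-n^2(z)^n+(n+1)(z)^{n+1}$ played in the Gandhi case. It is checked directly: from $(z+1)^j=\frac{z+j}{z}(z)^j$ one gets $z^2\Lambda((z)^j)=z(z+j)(z)^j$, hence $R((z)^j)=z(z)^j\bigl[(z+j)-(z-1)\bigr]=(j+1)\,z(z)^j$; finally $z(z)^j=(z)^{j+1}-j(z)^j$, because $z=(z+j)-j$ and $(z+j)(z)^j=(z)^{j+1}$.

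Then I would prove $P_n(z)=\sum_{j=0}^n(-1)^{n+j}LS(n,j)\,j!\,(z)^j$ by induction on $n$. The base case $n=1$ is immediate, since $LS(1,1)=1$ and $LS(1,0)=0$. For the inductive step, apply $R$ termwise to the expression for $P_n$, insert the auxiliary identity, split the result into the two sums carrying $(z)^j$ and $(z)^{j+1}$, and shift the summation index in the latter. The coefficient of $(-1)^{(n+1)+j}\,j!\,(z)^j$ then becomes $LS(n,j-1)+j(j+1)LS(n,j)$, which equals $LS(n+1,j)$ by the defining recurrence (\ref{second}) of the Legendre-Stirling numbers of the second kind; the boundary conventions $LS(n,-1)=0$ and $LS(n,n+1)=0$ take care of the endpoints $j=0$ and $j=n+1$. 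Hence $P_{n+1}(z)=\sum_{j=0}^{n+1}(-1)^{(n+1)+j}LS(n+1,j)\,j!\,(z)^j$, which closes the induction.

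The one step that really requires care is the auxiliary identity for $R((z)^j)$: the appearance of the factor $j(j+1)$ in place of $j^2$ is precisely what forces the Legendre-Stirling numbers, rather than the central factorial numbers, to show up. Deriving the operator $R$ from (\ref{rrel}) and the bookkeeping of signs and index shifts in the inductive step are routine.
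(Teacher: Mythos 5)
Your proposal is correct and follows essentially the same route as the paper: pass to $\tilde{P}_n(z)=z\tilde{F}_n(z)$, write the recurrence in operator form with $\tilde{R}=z^2\Lambda-(z-1)z$, and use the key identity $\tilde{R}((z)^j)=-j(j+1)(z)^j+(j+1)(z)^{j+1}$ together with the recurrence (\ref{second}) for the Legendre--Stirling numbers. You merely spell out the induction that the paper leaves implicit (it only states the operator identity and refers to the analogous argument for the Gandhi polynomials), and your verification of the auxiliary identity is sound.
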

\begin{proof}
The proof of the representation (\ref{polynom11}) is similar to the proof of (\ref{polynom}). It is more convenient to use the polynomials $\tilde{P}_n(z):=z\tilde{F}_n(z)$, which, as can be easily verified, satisfy the recurrence relation
\begin{equation}
\tilde{P}_{n+1}(z)=z^2\tilde{P}_{n}(z+1)-(z-1)z\tilde{P}_{n}(z)
\label{rrel11}
\end{equation}
starting from $\tilde{P}_1(z)=z$. We rewrite this relation in the operator form $\tilde{P}_{n+1}(z)=\tilde{R}\left(\tilde{P}_{n}(z)\right) $ with the operator $\tilde{R}=z^2\Lambda-(z-1)z$. It is easy to verify that
\[
\tilde{R}\left((z)^n\right)=-n(n+1)(z)^n+(n+1)(z)^{n+1}\;\; \forall n\geq 0.
\]
\end{proof}

The relation (\ref{polynom11}) implies
\[
\tilde{F}_{n}(m+1)=\frac{1}{(m+1)!}\sum_{j=0}^n(-1)^{n+j}LS(n, j)j! (j+m)!.
\]
Comparing this expression with (\ref{Fnom1}), we get
\begin{proposition}
The identity
\begin{equation}
\sum_{j=0}^{m}(-1)^{m+j}Ls(m, j)H_{2n+2j-1}=m!\sum_{j=0}^n(-1)^{n+j}LS(n, j)j! (j+m)!
\label{ident}
\end{equation}
is valid for all $m\geq 0$ and $n\geq 1$.
\end{proposition}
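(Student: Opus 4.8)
The plan is to recognize both sides of (\ref{ident}) as two formulas for one and the same number, the value $\tilde{F}_{n}(m+1)$ of the polynomial defined by the recurrence (\ref{rrel}), and then simply to equate these formulas. The first formula is Proposition~\ref{pr:1}, which gives the left-hand side as
\[
\sum_{j=0}^{m}(-1)^{m+j}Ls(m,j)H_{2n+2j-1}=\tilde{F}_{n}(m+1)\,m!\,(m+1)!.
\]
Although Proposition~\ref{pr:1} has not yet been proved at this point, the text announces that it is a special case of a more general statement to be established later, so it is legitimate to take it as available here.

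The second formula comes from the already-proved representation (\ref{polynom11}). Evaluating $z\tilde{F}_{n}(z)=\sum_{j=0}^{n}(-1)^{n+j}LS(n,j)\,j!\,(z)^{j}$ at $z=m+1$ and using the elementary identity $(m+1)^{j}=(m+1)(m+2)\cdots(m+j)=(m+j)!/m!$, valid for all $j\geq 0$ (it gives $1$ when $j=0$), one divides through by $m+1$ to obtain
\[
\tilde{F}_{n}(m+1)=\frac{1}{(m+1)!}\sum_{j=0}^{n}(-1)^{n+j}LS(n,j)\,j!\,(j+m)!.
\]

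Substituting this expression for $\tilde{F}_{n}(m+1)$ into the formula from Proposition~\ref{pr:1} cancels the factor $(m+1)!$ and leaves exactly $m!\sum_{j=0}^{n}(-1)^{n+j}LS(n,j)\,j!\,(j+m)!$ on the right, which is the identity (\ref{ident}). I do not anticipate any real obstacle: given Proposition~\ref{pr:1}, this is a one-line comparison. The only items requiring a moment's care are the rising-factorial-to-factorial bookkeeping above and the degenerate case $m=0$, in which the left-hand side reduces to $H_{2n-1}$ and one recovers (\ref{getthe}) using $Ls(0,0)=LS(0,0)=1$; this is the same convention already noted in the remark following (\ref{Fnom121234}) and needs no separate argument.
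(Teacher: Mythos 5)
Your proposal is correct and is essentially identical to the paper's own derivation: the paper likewise evaluates the representation (\ref{polynom11}) at $z=m+1$ to get $\tilde{F}_{n}(m+1)=\frac{1}{(m+1)!}\sum_{j=0}^n(-1)^{n+j}LS(n,j)\,j!\,(j+m)!$ and compares this with (\ref{Fnom1}). The rising-factorial bookkeeping and the appeal to Proposition~\ref{pr:1} (proved later in the paper as a special case of Proposition~\ref{pr:8}) match the paper's logic exactly.
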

As a result, we have an identity generalizing (\ref{getthe}).

\section{General approach} 

\label{sect:4}

\subsection{Polynomials determined by Stirling type numbers}

So, we have at our disposal two substantial examples of sequences of polynomials $F_n(z)$ defined by the formula
\begin{equation}
zF_{n}(z)=\sum_{j=0}^n(-1)^{n+j}T(n, j)j! (z)^{j},
\label{Pnz21}
\end{equation}
where $T(n, j)$ are Stirling type numbers of the second kind corresponding to some sequence $(a_n)_{n\geq 1}$ and satisfying the relation (\ref{second}). Note that for these two examples the sequence of polynomials satisfies a recurrence relation of the form
\begin{equation}
F_{n+1}(z)=h(z)F_{n}(z+1)-g(z)F_{n}(z),
\label{Pnz11}
\end{equation}
starting from $F_1(z)=1$, where $h(z)$ and $g(z)$ are some polynomials of the second degree, although in the general case  a sequence of polynomials of the form (\ref{Pnz21}) not required to satisfy any  relations. Taking into account that
\[
(z)^{n}=\sum_{j=0}^n(-1)^{n+j}s(n, j)z^j,
\]
where $s(n, j)$ are the classical Stirling numbers of the first kind, we get
\begin{equation}
F_{n}(z)=\sum_{j=1}^n(-1)^{n+j}\left(\sum_{q=j}^nq!s(q, j)T(n, q)\right)z^{j-1}.
\label{genrep}
\end{equation}
For example, we have
\[
F_1(z)=1,\;\;F_2(z)= 2z-\left(T(2, 1)-2\right),
\]
\[
F_3(z)= 6z^2-\left(2T(3, 2)-18\right)z+\left(T(3, 1)-2T(3, 2)+12\right),
\]
\begin{eqnarray}
F_4(z)&=& 24z^3-\left(6T(4, 3)-144\right)z^2+(2T(4, 2)-18T(4, 3)+264)z \nonumber\\
&&-\left(T(4, 1)-2T(4, 2)+12T(4, 3)+144\right),\ldots \nonumber
\end{eqnarray}
Here we take into account that $T(n, n)=1\;\; \forall n\geq 1$.

\subsection{Representation for the Genocchi numbers of the second kind}

We make a digression and note that using  (\ref{genrep}), we can obtain the following representation for the second kind Genocchi numbers:
\begin{proposition}
The identity
\[
H_{2n-3}=(-1)^n\sum_{j=2}^nLS(n, j)j!s(j, 2)-\delta_{n, 2}
\]
holds for all   $n\geq 2$.
\end{proposition}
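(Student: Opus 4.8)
The plan is to read off the coefficient of $z$ in $\tilde{F}_n(z)$ in two different ways and compare. On one hand, (\ref{polynom11}) exhibits $\tilde{F}_n(z)$ in exactly the shape (\ref{Pnz21}) with $T(n,j)$ replaced throughout by the Legendre--Stirling numbers $LS(n,j)$, so formula (\ref{genrep}) applies verbatim with the same replacement:
\[
\tilde{F}_{n}(z)=\sum_{j=1}^n(-1)^{n+j}\left(\sum_{q=j}^nq!\,s(q, j)\,LS(n, q)\right)z^{j-1}.
\]
The coefficient of $z$ is the $j=2$ term, namely
\[
(-1)^{n+2}\sum_{q=2}^n q!\,s(q,2)\,LS(n,q)=(-1)^{n}\sum_{j=2}^n LS(n,j)\,j!\,s(j,2),
\]
which is precisely the right-hand side of the proposition apart from the correction $-\delta_{n,2}$. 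Hence it suffices to show that the coefficient of $z$ in $\tilde{F}_n(z)$ equals $H_{2n-3}+\delta_{n,2}$.

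On the other hand, I would compute this coefficient, call it $c_n$, directly from the recurrence (\ref{rrel}) written with $n$ replaced by $n-1$, that is $\tilde{F}_n(z)=z(z+1)\tilde{F}_{n-1}(z+1)-(z-1)z\tilde{F}_{n-1}(z)$ for $n\ge 2$. Writing $z(z+1)=z+z^2$ and $(z-1)z=z^2-z$ and picking out the linear term on the right, the $z^2$-contributions are irrelevant and what remains is the constant term of $\tilde{F}_{n-1}(z+1)$, namely $\tilde{F}_{n-1}(1)$, plus the constant term of $\tilde{F}_{n-1}(z)$, namely $\tilde{F}_{n-1}(0)$. Thus $c_n=\tilde{F}_{n-1}(1)+\tilde{F}_{n-1}(0)$ for every $n\ge 2$. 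Now $\tilde{F}_{n-1}(0)=\delta_{n,2}$, since $\tilde{F}_k(0)=0$ for $k\ge 2$ (as already noted, this follows at once from (\ref{rrel})) while $\tilde{F}_1$ is the constant $1$; and $\tilde{F}_{n-1}(1)=H_{2n-3}$ by Proposition \ref{pr:1} taken with $m=0$, which gives $\tilde{F}_k(1)=H_{2k-1}$. Therefore $c_n=H_{2n-3}+\delta_{n,2}$, and comparing with the first computation yields the claimed identity for all $n\ge 2$.

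This is a short calculation once the ingredients are assembled, so there is no serious obstacle; the only point requiring care is the bookkeeping of the $\delta_{n,2}$ term, which is forced entirely by the fact that the seed polynomial $\tilde{F}_1\equiv 1$ does not vanish at the origin, unlike every later $\tilde{F}_n$. As a sanity check for the boundary case $n=2$, the sum $\sum_{j=2}^n$ then reduces to the single term $j=2$, giving $(-1)^2\,LS(2,2)\,2!\,s(2,2)=2$, and indeed $2-\delta_{2,2}=1=H_1$, as required.
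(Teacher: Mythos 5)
Your proof is correct and follows essentially the same route as the paper: extracting the coefficient of $z$ in $\tilde{F}_n(z)$ is exactly the paper's computation of $\tilde{F}'_n(0)$, done once via the recurrence (\ref{rrel}) to get $\tilde{F}_{n-1}(1)+\tilde{F}_{n-1}(0)=H_{2n-3}+\delta_{n,2}$ and once via the expansion (\ref{genrep}) specialized to $LS(n,j)$. Your version is in fact slightly more careful, since you keep the sign $(-1)^{n+2}=(-1)^n$ from (\ref{genrep}) that the paper's displayed formula for $\tilde{F}'_n(0)$ omits.
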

\begin{proof}
Consider the sequence of polynomials $\tilde{F}_n(z)$ corresponding to the number sequence $a_n=n(n+1)$. As we know, these polynomials satisfy the recurrence relation (\ref{rrel}). From this relation we get
\[
\tilde{F}^{\prime}_{n+1}(z)=\left(2z+1\right)\tilde{F}_{n}(z+1)+z(z+1)\tilde{F}^{\prime}_{n}(z+1)-(2z-1)\tilde{F}_{n}(z)-(z-1)z\tilde{F}^{\prime}_{n}(z).
\]
Hence 
\[
\tilde{F}^{\prime}_{n}(0)=\tilde{F}_{n-1}(1)+\tilde{F}_{n-1}(0)=H_{2n-3}+\delta_{n, 2}.
\]
In turn, from  (\ref{genrep}) we get
\[
\tilde{F}^{\prime}_{n}(0)=\sum_{j=2}^nLS(n, j)j!s(j, 2).
\]
Thus, the proposition is proved.
\end{proof}

\subsection{Recurrence relation}

Consider the operator $R:=f(z)\Lambda -g(z)$, where $f(z)$ and $ g(z)$ are supposed to be some second-order polynomials 
\begin{proposition} \label{pr:6}
Let $(a_n)_{n\geq 1}$ be an arbitrary numerical sequence. The relation
\begin{equation}
R\left((z)^n\right)=-a_n(z)^n+(n+1)(z)^{n+1}\;\; \forall n\geq 1
\label{Rrec}
\end{equation}
is true if and only if
\begin{equation}
f(z)=z(z+4+a_1-a_2),\;\; g(z)=z^2+(3+a_1-a_2)z+2+2a_1-a_2
\label{1}
\end{equation}
and
\begin{equation}
a_n= -a_1(n-2)+a_2(n-1)+(n-2)(n-1)\;\; \forall n\geq 3.
\label{2}
\end{equation}
\end{proposition}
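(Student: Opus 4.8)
The plan is to reduce the operator identity (\ref{Rrec}) to a polynomial identity in $z$ parametrized by $n$, and then to read off (\ref{1}) and (\ref{2}) by comparing coefficients. The starting observation is that the rising factorials satisfy
\[
z\,\Lambda\!\left((z)^n\right) = (z)^{n+1} = (z+n)(z)^n \qquad \forall n\geq 1.
\]
Multiplying (\ref{Rrec}) by $z$ and using this, the left-hand side becomes $(z)^n\bigl(f(z)(z+n) - z\,g(z)\bigr)$, while the right-hand side becomes $z\,(z)^n\bigl((n+1)(z+n) - a_n\bigr)$. Since $(z)^n$ is a nonzero polynomial for $n\geq 1$, I can cancel it, and (\ref{Rrec}) holds for a given $n\geq 1$ if and only if
\begin{equation}
f(z)(z+n) - z\,g(z) = (n+1)z^2 + \bigl(n(n+1) - a_n\bigr)z .
\label{keyid}
\end{equation}

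For the forward implication I would assume (\ref{keyid}) for all $n\geq 1$, write $f(z)=f_2z^2+f_1z+f_0$ and $g(z)=g_2z^2+g_1z+g_0$ (these have degree at most $2$ by hypothesis), expand the left-hand side of (\ref{keyid}), and match the coefficients of $z^3$, $z^2$, $z^1$, $z^0$, treating each side's coefficients as polynomials in $n$. The $z^3$ and $z^2$ comparisons force $f_2=g_2=1$ and $g_1=f_1-1$, the $z^0$ comparison forces $f_0=0$, and the $z^1$ comparison gives $a_n = n^2+(1-f_1)n+g_0$. Specializing this last relation to $n=1$ and $n=2$ and solving the resulting linear system for $f_1$ and $g_0$ yields $f_1 = 4+a_1-a_2$ and $g_0 = 2+2a_1-a_2$, hence $g_1 = 3+a_1-a_2$; this is exactly (\ref{1}). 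Feeding these values back into $a_n = n^2+(1-f_1)n+g_0$ gives $a_n=(n-1)(n-2)-a_1(n-2)+a_2(n-1)$, which is (\ref{2}) (it holds in fact for all $n\geq 1$, in particular for $n\geq 3$).

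For the reverse implication I would substitute the expressions (\ref{1}) for $f$ and $g$, together with $a_n=(n-1)(n-2)-a_1(n-2)+a_2(n-1)$ for all $n\geq 1$ (this reduces to $a_1=a_1$ and $a_2=a_2$ at $n=1,2$, so it is consistent with the hypotheses), into (\ref{keyid}) and verify that the two sides agree: the $z^3$, $z^2$ and constant terms match automatically, and the $z^1$ coefficient reproduces precisely (\ref{2}), so (\ref{Rrec}) follows. I do not expect a genuine obstacle, since everything is elementary polynomial algebra; the one point worth a moment of care is the reduction to (\ref{keyid}) and the observation that requiring the left-hand side of (\ref{keyid}) to be of degree $2$ in $z$, uniformly in $n$, is exactly what pins down $f_2=g_2=1$ and $f_0=0$.
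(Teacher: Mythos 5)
Your proposal is correct and follows essentially the same route as the paper: write $f$ and $g$ with six undetermined coefficients, compare coefficients of powers of $z$ (with $n=1,2$ pinning down the coefficients and the remaining $n$ forcing (\ref{2})), and check the converse by direct substitution. Your preliminary reduction via $z\,\Lambda\left((z)^n\right)=(z+n)(z)^n$, which turns (\ref{Rrec}) into the degree-$3$ polynomial identity (\ref{keyid}), is just a clean way of organizing the same coefficient comparison that the paper carries out more tersely.
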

\begin{proof}
On the one hand, it is easy to verify that, by substituting (\ref{1}) and (\ref{2}) into the relation (\ref{Rrec}), we obtain the identity for any $n\geq 1$. On the other hand, let $f(z)=f_2z^2+f_1z+f_0$ and $g(z)=g_2z^2+g_1z+g_0$, where $(f_2, f_1, f_0, g_2, g_1, g_0) $ are six undetermined coefficients. Comparing the coefficients at equal powers of $z$ in (\ref{Rrec}) for $n = 1, 2 $, we obtain a system of linear equations for these coefficients from which we uniquely determine $f_2=1,\; f_1=4+a_1-a_2,\; f_0=0$ and $g_2 = 1,\; g_1=3+a_1-a_2,\; g_0=2+2a_1-a_2 $. Clearly, the relation (\ref {Rrec}) for $n\geq 3$ uniquely determines $a_n$ for $n\geq 3 $ and we already know that these values are given by the expression (\ref{2}).
\end{proof}

By direct calculation it is easy to verify that, by virtue of (\ref{Rrec}), the sequence of polynomials $P_n(z)=zF_n(z)$ defined by the formula (\ref{Pnz21}) satisfies the recurrence relation
\begin{equation}
P_{n+1}(z)=f(z)P_{n}(z+1)-g(z)P_{n}(z).
\label{4365}
\end{equation}
with $P_1(z)=z$, where the polynomials $f(z)$ and $g(z) $ are determined by the formula (\ref{1}). The case $(a_1, a_2)=(1, 4)$ corresponds to the Gandhi polynomials. In turn, if $(a_1, a_2) = (2, 6)$, then we get a sequence of polynomials defined by the relation (\ref{rrel}). Note that in both cases the relation $a_2 = 2a_1 + 2$ holds, and as a result, $f(z)-g(z) = z$. In this case, the calculation of the polynomials $P_n(z)$ using the recurrence relation (\ref{4365}) can be performed starting from $P_0(z)=1$.

\subsection{Genocchi type numbers}

We now consider a countable class of number sequences $a_n^{(v)}=n(n+v)$ for integers $v\geq 0$. Using the formulas (\ref{first}) and (\ref{second}), for each such sequence we calculate the corresponding Stirling type numbers $t^{(v)}(m, j)$ and $T^{(v) }(m, j)$ and then we determine corresponding  sequence of polynomials $(F_n^{(v)}(z))_{n\geq 1}$ by the formula
\begin{equation}
zF_{n}^{(v)}(z)=\sum_{j=0}^n(-1)^{n+j}T^{(v)}(n, j)j! (z)^{j}.
\label{mnogoch}
\end{equation}
It is easy to see that, by the proposition \ref{pr:6},  that the polynomials $F_{n}^{(v)}(z)$ can be defined by the recurrence relation 
\begin{equation}
F^{(v)}_{n+1}(z)=(z+1)\left(z-v+1\right)F^{(v)}_{n}(z+1)-z(z-v)F^{(v)}_{n}(z)
\label{recurels}
\end{equation}
starting from $F^{(v)}_{1}(z)$. In fact, these are polynomials of two variables $v$ and $z$, which can take any complex values. The first few of them are as follows
\[
F^{(v)}_2(z)=2z-v+1,\;\;F^{(v)}_3(z)=6z^2-\left(6v-8\right)z+v^2-4v+3,
\]
\[
F^{(v)}_4(z)=24z^3-\left(36v-60\right)z^2+\left(14v^2-60v+54\right)z-v^3+11v^2-27v+17,\ldots
\]
\begin{remark}
Obviously, the polynomials $ P^{(v)}_{n}(z)=zF^{(v)}_{n}(z)$ for $n\geq 1$ are defined by the recurrence relation
\[
P^{(v)}_{n+1}(z)=z(z-v+1)P^{(v)}_{n}(z+1)-z(z-v)P^{(v)}_{n}(z),
\]
starting from $P^{(v)}_1(z)=z$. Let $v$ be an arbitrary complex number. It is easy to verify that the sequence $a_n=n(n+v)$ gives the general case of a sequence of the form (\ref{2}) with the condition $a_2=2a_1+2$. In this case, $f(z)-g(z) = z$ and therefore these polynomials can be calculated starting from $ P^{(v)}_0(z) = 1$.
\end{remark}
Let us pay attention to the similarity of formulas expressing identities associated with the Genocchi numbers and the corresponding polynomials. We notice that, in principle, one can write the formulas (\ref{Fnom121})and (\ref{Fnom1})  in the form
\begin{equation}
\sum_{j=0}^m(-1)^{m+j}t^{(v)}(m, j)g_{n+j}^{(v)}=F_n^{(v)}(m+v)\frac{m!(m+v)!}{v!},
\label{general}
\end{equation}
for $v=0, 1$. Moreover, $g_{n}^{(0)}=G_{2n}$ and $g_{n}^{(1)}=H_{2n-1}$ for $ n\geq 1$. First of all, we note that if we suppose that (\ref{general}) is true, then the numbers $g_{n}^{(v)}$ for each fixed $v\geq 0$ are defined as
\begin{equation}
g_{n}^{(v)}=F_n^{(v)}(v)\;\;
\forall n\geq 1, v\geq 0.
\label{gnv1}
\end{equation}
\begin{proposition}
\label{pr:8}
For any integer $v\geq 0$, and the numbers defined by the relation (\ref{gnv1}), the formula (\ref{general}) is valid for all $m\geq 0$.
\end{proposition}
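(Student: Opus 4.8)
The plan is to prove (\ref{general}) by induction on $m$, keeping $v$ fixed and treating the identity as a statement uniform in $n\geq 1$. Two ingredients drive the argument. First, the defining recurrence (\ref{first}) for the first-kind Stirling type numbers, specialised to $a_n=a_n^{(v)}=n(n+v)$, reads
\[
t^{(v)}(m+1,j)=t^{(v)}(m,j-1)-m(m+v)\,t^{(v)}(m,j).
\]
Second, the polynomials $F_n^{(v)}(z)$ obey the recurrence (\ref{recurels}), which is an identity of polynomials in $z$ and hence may be specialised to any integer value of the argument (in particular to points where a factor vanishes, as happens near $z=v$ when $v>0$). The relation (\ref{gnv1}) defining $g_n^{(v)}=F_n^{(v)}(v)$ will be used only to settle the base case.

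For $m=0$ the left side of (\ref{general}) is $t^{(v)}(0,0)\,g_n^{(v)}=g_n^{(v)}$ since $t^{(v)}(0,0)=\delta_{0,0}=1$, while the right side is $F_n^{(v)}(v)\cdot 0!\,v!/v!=F_n^{(v)}(v)$, and these agree by (\ref{gnv1}). For the inductive step, assume (\ref{general}) for a given $m$ and every $n$. Substituting the recurrence for $t^{(v)}(m+1,j)$ into the sum and splitting it, the contribution of $t^{(v)}(m,j-1)$ becomes, after the reindexing $j\mapsto j+1$ (the boundary term with $t^{(v)}(m,-1)=0$ dropping out), exactly the left side of (\ref{general}) with $m$ unchanged and $n$ replaced by $n+1$, while the contribution of $-m(m+v)\,t^{(v)}(m,j)$ equals $m(m+v)$ times the left side of (\ref{general}) with parameters $m$ and $n$. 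Invoking the inductive hypothesis for both pieces, the left side of (\ref{general}) for $m+1$ equals
\[
\left(F_{n+1}^{(v)}(m+v)+m(m+v)\,F_n^{(v)}(m+v)\right)\frac{m!\,(m+v)!}{v!}.
\]

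It then suffices to check that the bracketed quantity equals $(m+1)(m+1+v)\,F_n^{(v)}(m+1+v)$, since multiplying through by $m!\,(m+v)!/v!$ gives precisely $F_n^{(v)}(m+1+v)\,(m+1)!\,(m+1+v)!/v!$, the right side of (\ref{general}) for $m+1$. But this is exactly (\ref{recurels}) evaluated at $z=m+v$: there $F_{n+1}^{(v)}(z)=(z+1)(z-v+1)F_n^{(v)}(z+1)-z(z-v)F_n^{(v)}(z)$, so at $z=m+v$ one gets $F_{n+1}^{(v)}(m+v)+m(m+v)F_n^{(v)}(m+v)=(m+1)(m+1+v)F_n^{(v)}(m+1+v)$, which closes the induction. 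I do not expect a genuine obstacle; the argument is a routine induction, and the only points needing care are the bookkeeping of vanishing boundary terms under reindexing and the observation that the polynomial recurrence (\ref{recurels}) may legitimately be specialised at $z=m+v$. The one conceptual step to get right is recognising that this specialised polynomial recurrence matches the Stirling recurrence for $t^{(v)}(m,j)$ term by term. (One could instead argue from the closed form (\ref{mnogoch}) and the generating function for $t^{(v)}(m,j)$, but the inductive route is shorter.)
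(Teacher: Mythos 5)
Your proof is correct and follows essentially the same route as the paper: the paper introduces $\xi_{m,n}^{(v)}=\sum_{j}(-1)^{m+j}t^{(v)}(m,j)g_{n+j}^{(v)}$, derives the recursion $\xi_{m+1,n}^{(v)}=\xi_{m,n+1}^{(v)}+m(m+v)\xi_{m,n}^{(v)}$ from the Stirling-type recurrence, and verifies that the claimed closed form satisfies the same recursion via (\ref{recurels}) at $z=m+v$, which is exactly your induction on $m$ written out more tersely. Your version just makes the base case and the boundary-term bookkeeping explicit.
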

\begin{proof}
Thus, for each fixed $n\geq 1$, the numbers $g_{n}^{(v)}$ can be calculated as the values of the corresponding polynomial $g_{n}(v):=F_n^{(v)}(v)$ for the integer values of the argument $v\geq 0 $. We need to prove that for numbers defined in this way, the relation (\ref{general}) also holds for the remaining $m\geq 1 $.
Let us denote
\[
\xi_{m, n}^{(v)}=\sum_{j=0}^{m}(-1)^{m+j}t^{(v)}(m, j)g_{n+j}^{(v)}.
\]
The numbers $\xi_{m, n}^{(v)}$ satisfy the relation
\begin{equation}
\xi_{m+1, n}^{(v)}=\xi_{m, n+1}^{(v)}+m\left(m+v\right)\xi_{m, n}^{(v)},
\label{sootn}
\end{equation}
with the condition $\xi_{0, n}^{(u)}=g_n^{(v)}$. From the last relation, in particular, we obtain
\[
\xi_{1, n}^{(v)}=g_{n+1}^{(v)},\;\;
\xi_{2, n}^{(v)}=g_{n+2}^{(v)}+(v+1)g_{n+1}^{(v)},\ldots
\]
Let us notice that the relation (\ref{sootn}) is determined only by the fact that Stirling type numbers of the first kind $t^{(v)}(n, j)$, by definition, satisfy the recurrence relation
\[
t^{(v)}(n+1,j)=t^{(v)}(n,j-1)-n(n+v)t^{(v)}(n,j).
\]
Therefore, we must prove that
\[
\xi_{m, n}^{(v)}=F_n^{(v)}(m+v)\frac{m!(m+v)!}{v!}.
\]
Substituting this expression into (\ref{sootn}), we get
\[
F_n^{(v)}(m+v+1)(m+1)!(m+v+1)!=F_{n+1}^{(v)}(m+v)m!(m+v)! 
+m(m+v)F_n^{(v)}(m+v)m!(m+v)!. 
\]
It is easy  to make sure that, the last relation holds by virtue of (\ref{recurels}).
\end{proof} 
The relation  (\ref{mnogoch}) implies
\[
F^{(v)}_n(m+v)=\frac{1}{(m+v)!}\sum_{j=0}^{n}(-1)^{n+j}T^{(v)}(n, j)j!\left(j+m+v-1\right)!.
\]
Comparing this formula with (\ref{general}), we get
\begin{proposition}
\label{pr:9}
The relation
\begin{equation}
\sum_{j=0}^m(-1)^{m+j}t^{(v)}(m, j)g_{n+j}^{(v)}=\frac{m!}{v!}\sum_{j=0}^{n}(-1)^{n+j}T^{(v)}(n, j)j!\left(j+m+v-1\right)!
\label{sootn123}
\end{equation}
is an identity for all $m, v\geq 0$ and $n\geq 1$.
\end{proposition}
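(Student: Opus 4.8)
The plan is to read off the identity~(\ref{sootn123}) by equating two expressions for the single quantity $F_n^{(v)}(m+v)$. One of them is handed to us by Proposition~\ref{pr:8}: recalling from~(\ref{gnv1}) that $g_{n+j}^{(v)}=F_{n+j}^{(v)}(v)$, equation~(\ref{general}) says that the left-hand side of~(\ref{sootn123}) equals $F_n^{(v)}(m+v)\,\frac{m!\,(m+v)!}{v!}$, and this may simply be quoted. The other expression comes from the defining relation~(\ref{mnogoch}) for the polynomials $F_n^{(v)}$ and exhibits $F_n^{(v)}(m+v)$ as a sum over the second-kind Stirling-type numbers $T^{(v)}(n,j)$. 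Substituting the second into the first and cancelling one factorial produces~(\ref{sootn123}).

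Concretely, in~(\ref{mnogoch}) the $j=0$ summand vanishes because $T^{(v)}(n,0)=0$ for $n\geq1$, so the sum runs effectively over $j\geq1$, where $(z)^{j}=z(z+1)\cdots(z+j-1)$. Assuming first that $m+v\geq1$ and using $(m+v)^{j}=(j+m+v-1)!/(m+v-1)!$, evaluation of~(\ref{mnogoch}) at $z=m+v$ gives
\[
(m+v)\,F_n^{(v)}(m+v)=\frac{1}{(m+v-1)!}\sum_{j=1}^{n}(-1)^{n+j}T^{(v)}(n, j)\,j!\,(j+m+v-1)!,
\]
so that, dividing by $m+v$ and restoring the (zero) $j=0$ term,
\[
F_n^{(v)}(m+v)=\frac{1}{(m+v)!}\sum_{j=0}^{n}(-1)^{n+j}T^{(v)}(n, j)\,j!\,(j+m+v-1)!,
\]
which is exactly the formula recorded just before the statement. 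Inserting this into $F_n^{(v)}(m+v)\,\frac{m!\,(m+v)!}{v!}$ cancels the factor $(m+v)!$ and leaves precisely the right-hand side of~(\ref{sootn123}).

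I do not anticipate a genuine obstacle here: the substantive content was already dispatched in Proposition~\ref{pr:8}, and what remains is elementary manipulation of factorials. The one place requiring a word of care is the formal appearance of $(m+v-1)!$ (that is, of $(-1)!$) when $m=v=0$; this is handled exactly as in the Remark following~(\ref{Fnom121234}), namely by reading~(\ref{sootn123}) under the convention that its $j=0$ term equals $(-1)^{n}T^{(v)}(n,0)\,0!\,(m+v-1)!=0$, or, equivalently, by treating $m=v=0$ as a separate case, where both sides reduce to the representation of $G_{2n}$ already noted in that Remark. In all remaining cases $m+v\geq1$, the division by $m+v$ is legitimate, and the argument above goes through without change.
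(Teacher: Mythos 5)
Your proof is correct and follows exactly the paper's route: evaluate the representation~(\ref{mnogoch}) at $z=m+v$ to get $F_n^{(v)}(m+v)=\frac{1}{(m+v)!}\sum_{j=0}^{n}(-1)^{n+j}T^{(v)}(n,j)\,j!\,(j+m+v-1)!$ and combine this with~(\ref{general}) from Proposition~\ref{pr:8}. Your explicit handling of the $m=v=0$ case (where $(-1)!$ formally appears but the offending term is killed by $T^{(v)}(n,0)=0$) is a small but welcome addition that the paper only addresses in the analogous Remark after~(\ref{Fnom121234}).
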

Obviously, the formula (\ref{sootn123}) generalizes the identities (\ref{Fnom121234}) and (\ref{ident}) for the Genocchi numbers and we can consider a countable class of integer sequences $(g_{n}^{(v) })_{v\geq 0}$. Unfortunately, the combinatorial meaning of these numbers for integers $v\geq 2$ is not yet known.
\begin{remark}
Let us remark that the propositions \ref{pr:8} and \ref{pr:9} by themselves do not claim that $g_{n}^{(0)}$ and $g_{n}^{(1)}$ are the Genocchi numbers. The fact that this is really so we get from what is proved in the works \cite{Carlitz2}, \cite{Riordan}, \cite {Dumont}.
\end{remark}
The following table shows some numbers $g_n^{(v)}$ for small values of $n$ and $v$:
\begin{center}
\begin{tabular}{|c|c|c|c|c|c|c|c|c|}
\hline
$n$&$1$&$2$&$3$&$4$&$5$&$6$&$7$ \\
\hline
$g_{n}^{(0)}$&$1$&1&3&17& 155&2073 &38227 \\
\hline
$g_{n}^{(1)}$&$1$&2&8&56& 608& 9440 &198272 \\
\hline
$g_{n}^{(2)}$&$1$&3&15&123& 1527& 26691 & 623391\\
\hline
$g_{n}^{(3)}$&$1$&4&24&224 & 3104& 59904 & 1531264 \\
\hline
$g_{n}^{(4)}$&$1$&5& 35 &365 & 5555& 116765 & 3229475 \\
\hline
$g_{n}^{(5)}$&$1$& 6& 48 & 552 & 9120 & 206688 & 6131712 \\
\hline
$g_{n}^{(6)}$&$1$& 7 & 63 & 791 & 14063 & 340935 & 10774687 \\
\hline
\end{tabular}
\end{center}
The first few polynomials $g_{n}(v)$ have the following form:
\[
g_{1}(v)=1,\;\;
g_{2}(v)=v+1,\;\;
g_{3}(v)=(v+1)\left(v+3\right),\;\;
g_{4}(v)=(v+1)\left(v^2+10v+17\right),
\] 
\[
g_{5}(v)=(v+1)\left(v^3+25v^2+123v+155\right),\;\;
g_{6}(v)=(v+1)\left(v^4+56v^3+590v^2+2000v+2073\right),
\]
\[
g_{7}(v)=(v+1)\left(v^5+119v^4+2362v^3+15942v^2+42485v+38227\right),\ldots
\]
\begin{proposition}
The polynomials $g_{n}(v)$, for all $n\geq 2$, are divisible by $v+1$.
\end{proposition}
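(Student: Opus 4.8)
The plan is to work with the two-variable polynomials $F^{(v)}_n(z)\in\mathbb{Z}[v,z]$ rather than with $g_n(v)$ directly. Since $g_n(v)=F^{(v)}_n(v)$ by (\ref{gnv1}), and since $v+1$ is irreducible in $\mathbb{Z}[v]$, the assertion ``$g_n(v)$ is divisible by $v+1$'' is the statement that $g_n(-1)=0$, or equivalently that $g_n(v)=(v+1)q_n(v)$ for some $q_n\in\mathbb{Z}[v]$. So I would first record the (routine) fact that, by the recurrence (\ref{recurels}) together with $F^{(v)}_1(z)=1$, each $F^{(v)}_n(z)$ is genuinely a polynomial in $v$ and $z$; consequently $F^{(v)}_n(z+1)$, and in particular the specialization $F^{(v)}_n(v+1)$, is an honest element of $\mathbb{Z}[v]$.

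The key step is simply to evaluate the recurrence (\ref{recurels}) at $z=v$. There the coefficient $z(z-v)$ of the unshifted term $F^{(v)}_n(z)$ vanishes, while the coefficient $(z+1)(z-v+1)$ of the shifted term $F^{(v)}_n(z+1)$ collapses to $(v+1)\cdot 1=v+1$. Hence
\[
g_{n+1}(v)=F^{(v)}_{n+1}(v)=(v+1)\,F^{(v)}_n(v+1)\qquad\text{for all }n\geq 1,
\]
which displays $g_{n+1}(v)$ as $v+1$ times the polynomial $F^{(v)}_n(v+1)$. Reindexing, $g_n(v)$ is divisible by $v+1$ for every $n\geq 2$, which is exactly the claim; moreover the identity identifies the quotient as $g_n(v)/(v+1)=F^{(v)}_{n-1}(v+1)$, and for $n=2$ it gives $g_2(v)=(v+1)F^{(v)}_1(v+1)=v+1$, matching the list above.

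I do not anticipate a real obstacle here: the only things needing a moment's care are the bookkeeping that $F^{(v)}_n(v+1)$ lies in $\mathbb{Z}[v]$ (so that the divisibility is a statement in the polynomial ring and not merely a numerical coincidence at integer $v$), and the shift $n\mapsto n+1$ that produces the stated range $n\geq 2$. As an alternative route one could instead set $v=-1$ in (\ref{recurels}), obtaining $F^{(-1)}_{n+1}(z)=(z+1)\big[(z+2)F^{(-1)}_n(z+1)-zF^{(-1)}_n(z)\big]$, so that $(z+1)\mid F^{(-1)}_{n+1}(z)$ and hence $g_n(-1)=F^{(-1)}_n(-1)=0$ for $n\geq 2$; but the evaluation at $z=v$ is cleaner and has the advantage of exhibiting the cofactor explicitly.
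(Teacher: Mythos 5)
Your argument is correct and is essentially identical to the paper's own proof: both substitute $z=v$ into the recurrence (\ref{recurels}), observe that the term with coefficient $z(z-v)$ vanishes, and obtain $g_{n+1}(v)=(v+1)F^{(v)}_n(v+1)$ with $F^{(v)}_n(v+1)\in\mathbb{Z}[v]$. Your extra remarks on polynomiality in $v$ and the reindexing are just a more careful write-up of the same step.
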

\begin{proof}
Substituting $z=v$ into the recurrence relation (\ref{recurels}), we obtain
\[
g_{n+1}(v)=(v+1)F_n^{(v)}(v+1)\;\; \forall n\geq 1.
\]
Since $F_n^{(v)}(v+1)$ is some polynomial in $v$, the proposition should be considered proved.
\end{proof}

\end{document}